\numberwithin{equation}{section}
\newtheorem{thm}{Theorem}[section]
\newtheorem{lem}[thm]{Lemma}
\newtheorem{prop}[thm]{Proposition}
\newtheorem*{conj*}{Conjecture}
\theoremstyle{definition}
\theoremstyle{remark}
\begin{document}

\title{Flat level sets of Allen-Cahn equation in half-space}
\author{Wenkui Du}
\address{Department of Mathematics, Massachusetts Institute of Technology}
\email{\tt duwenkui@mit.edu}
\author{Ling Wang}
\address{School of Mathematical Sciences, Peking University}
\email{\tt lingwang@stu.pku.edu.cn}
\author{Yang Yang}
\address{Department of Mathematics, Johns Hopkins University}
\email{\tt yyang@jhu.edu}


\subjclass{35B08, 35J15, 35J91}

\keywords{Allen-Cahn equation, De Giorgi's conjecture, minimal surfaces, blowdown method}

\begin{abstract}
We prove a half-space Bernstein theorem for Allen-Cahn equation. More precisely, we show that every  solution $u$ of the Allen-Cahn equation in the half-space $\overline{\mathbb{R}^n_+}:=\{(x_1,x_2,\cdots,x_n)\in\mathbb{R}^n:\,x_1\geq 0\}$ with $|u|\leq 1$, boundary value
given by the restriction of a one-dimensional solution  on $\{x_1=0\}$ and monotone condition $\partial_{x_n}u>0$ as well as limiting condition $\lim_{x_n\to\pm\infty}u(x',x_n)=\pm 1$ must itself be one-dimensional,  
 and the parallel flat level sets and  $\{x_1=0\}$ intersect at the same fixed angle in $(0, \frac{\pi}{2}]$. 
\end{abstract}
\maketitle

\section{Introduction}

In this paper,  we prove a half-space Bernstein theorem for  Allen-Cahn equation. This is related to a  half-space version De Giorgi's conjecture. We recall that the classical De Giorgi's conjecture was raised by De Giorgi in 1978 \cite{De}, which states as follows: 
\begin{conj*}[De Giorgi’s conjecture]
    If $u\in C^2(\mathbb{R}^n)$ is an entire solution of
    \begin{equation}\label{eq:Al-Ca}
        \Delta u=u^3-u,
    \end{equation}
    such that
    \begin{equation*}
        |u|\leq 1,\quad \partial_{x_n} u>0
    \end{equation*}
    in whole $\mathbb{R}^n$, then is it true that all the level sets of $u$ are hyperplanes, at least if $n \leq 8$? 
\end{conj*}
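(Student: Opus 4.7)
The natural plan follows Savin's program, which establishes the conjecture for $n\le 8$ under the additional limiting hypothesis $\lim_{x_n\to\pm\infty}u(x',x_n)=\pm 1$ (in the lower dimensions $n=2,3$ the simpler Liouville and energy methods of Ghoussoub--Gui and Ambrosio--Cabr\'e work without this hypothesis). The first step is a blow-down analysis. Set $u_R(x) := u(Rx)$ and consider the Modica--Mortola energy
\begin{equation*}
E(v, B_1) := \int_{B_1}\Big( \tfrac{1}{2}|\nabla v|^2 + W(v)\Big)\,dx, \qquad W(s) = \tfrac14(1-s^2)^2.
\end{equation*}
Monotonicity $\partial_{x_n}u>0$ together with $|u|\le 1$ yields the linear-in-$R$ bound $E(u, B_R)\le C R^{n-1}$, hence a uniform bound on $E(u_R, B_1)$. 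Modica's pointwise inequality $|\nabla u|^2 \le 2W(u)$, combined with the $\Gamma$-convergence of the rescaled Allen--Cahn energy to a constant multiple of the perimeter functional, then shows that a subsequence of the rescaled zero-sets $R^{-1}\{u=0\}$ converges in local Hausdorff distance to $\partial E^\ast$ for some set $E^\ast\subset \mathbb{R}^n$ of minimal perimeter.

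The second step exploits monotonicity: each $\{u=0\}$-level is a graph over $\{x_n=0\}$, so the blow-down $\partial E^\ast$ is a minimizing minimal graph over $\mathbb{R}^{n-1}$, neither empty nor vertical (vertical blow-downs being ruled out by the limit condition at $x_n=\pm\infty$). For $n\le 8$ the Bernstein theorem for entire minimal graphs (De Giorgi, Fleming, Almgren, Simons) forces this graph to be a hyperplane $\Pi$. The third and most delicate step is Savin's improvement-of-flatness: once the blow-down is the single hyperplane $\Pi$, one proves quantitatively that at every finite scale the zero-set lies in a slab whose rescaled thickness tends to zero. This forces $u$, on each compact set, to be $C^0$-close to a translate and rotation of the one-dimensional profile $g(t)=\tanh(t/\sqrt{2})$. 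A moving-plane or sliding argument in each direction orthogonal to $\Pi$ then upgrades the approximate symmetry to exact symmetry, yielding $\partial_{x_i}u\equiv 0$ for $i=1,\dots,n-1$ after a rotation, so $u$ and its level sets are one-dimensional.

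The main obstacle is Step 3, Savin's flatness iteration. It rests on a Harnack-type inequality for the ratio $\partial_{x_n}u/|\nabla u|$ and on a quantitative modulus of convergence to the one-dimensional profile inside thin slabs, whose proof couples minimal-surface regularity (interior $\varepsilon$-regularity up to dimension $7$ plus the Bernstein theorem in dimension $8$) to the PDE side of the argument. This coupling, mediated by the Simons cone, is precisely what restricts the method to $n\le 8$. Beyond that, the counterexamples of del Pino--Kowalczyk--Wei built on the Bombieri--De Giorgi--Giusti graph show that the conjecture fails for $n\ge 9$ in the absence of the limit at $x_n=\pm\infty$, so any successful approach in the remaining open dimensions $4\le n\le 8$ must use the limit condition in some essential way -- which is exactly the setting where the half-space rigidity theory developed in this paper becomes relevant.
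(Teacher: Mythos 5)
The item you were asked to ``prove'' is the original De Giorgi conjecture, stated in the paper only as a \emph{conjecture}: the paper offers no proof of it, and in fact in dimensions $4\le n\le 8$ without the limiting hypothesis $\lim_{x_n\to\pm\infty}u(x',x_n)=\pm 1$ it remains an open problem. The paper merely recalls the known partial results (Ghoussoub--Gui for $n=2$, Ambrosio--Cabr\'e for $n=3$, Savin for $4\le n\le 8$ \emph{with} the extra limiting hypothesis, and the del Pino--Kowalczyk--Wei counterexamples for $n\ge 9$) before developing its own half-space rigidity theorem, which is a different statement.

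Your proposal is a reasonable survey of Savin's program, but it does not prove the conjecture as stated: every step after the blow-down is carried out under the additional limiting hypothesis, which the conjecture does not grant, and you say as much in your opening and closing sentences. There is also a genuine imprecision in your first step: the energy bound $E(u,B_R)\le C R^{n-1}$ does not follow from monotonicity and $|u|\le 1$ alone. In Ambrosio--Cabr\'e and Savin one first uses monotonicity \emph{together with} the limiting condition to show that $u$ is a global minimizer (or to build a calibration/sliding comparison), and only then does one obtain the perimeter-type energy growth; without the limits at $x_n\to\pm\infty$ the comparison competitor need not have one-dimensional profiles at both ends, and the argument for the energy bound collapses. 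So, as a review of the literature your write-up is accurate enough, but as a proof of the statement it has an unclosable gap precisely where every known approach has one: removing the limiting hypothesis in the range $4\le n\le 8$ is still unresolved.
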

The conjecture is sometimes called ``the $\varepsilon$-version of the Bernstein problem for minimal graphs" because the level sets of $\varepsilon$-Allen-Cahn equation converges minimal hypersurface under some conditions (see \cite{Mo,HT,TW,Gu}). This relation and the Bernstein problem for minimal hypersurfaces explains why De Giorgi stated conjecture under dimension condtion $n \leq 8$.

De Giorgi’s Conjecture was proved true in dimension $n = 2$ by Ghoussoub and Gui \cite{GG} and for $n = 3$ by Ambrosio and Cabr\'e \cite{AC}. Savin \cite{Sa09} showed that for $4 \leq n \leq 8$, De Giorgi’s Conjecture holds under the additional natural limiting condition
\begin{equation}\label{eq:lim-assu}
    \lim_{x_n\to\pm\infty}u(x',x_n)=\pm 1
\end{equation}
holds pointwisely for every $x' = (x_1, \cdots , x_{n-1}) \in \mathbb{R}^{n-1}$. This condition implies that for every $\lambda \in (-1, 1)$, the level set $\{x \in \mathbb{R}^n : u(x) = \lambda\}$ of the function $u$ is entire graph with respect to the first $n - 1$ variables. Another proof of Savin's result provided by Wang can be found in \cite{Wa17}. On the other hand, for $n \geq 9$, del Pino, Kowalczyk and Wei \cite{dPKW} constructed monotone solutions which are not one-dimensional, so the dimension condition $n \leq 8$ in the De Giorgi's conjecture cannot be removed. It is worth noting that this counterexample also satisfies the limiting assumption \eqref{eq:lim-assu}.

Additionally, it is not hard to see that \eqref{eq:Al-Ca} is the Euler-Lagrange equation of energy functional 
\begin{equation}\label{eq:classical}
    J(u,\Omega):=\int_{\Omega} \frac{1}{2}|\nabla u|^2+\frac{1}{4}\left(1-u^2\right)^2\mathrm{d}x,\quad |u|\leq 1,
\end{equation}
where $\Omega$ is a $n$-dimensional domain in $\mathbb{R}^n$. One important rigidity result to highlight is the classification of solutions that are global minimizers of the associated energy functional \eqref{eq:classical} with $\Omega=\mathbb{R}^n$. Savin proved in the same paper \cite{Sa09} that global minimizers of \eqref{eq:classical} are one-dimensional for dimensions $n \leq 7$, while Liu, Wang, and Wei \cite{LWW} constructed counterexamples in dimensions $n \geq 8$. For more details and extensions to nonlinear equations' De Giorgi conjecture, we refer to \cite{CW, DS, Sa08, Sa09,Sa17, VSS} and the references therein. Recently, there have been some results related to the nonlocal De Giorgi conjecture (see \cite{CC14, DSV, Sa18, Sa19, SV09}) as well.

As the half-space Bernstein theorem for graphical minimal hypersurfaces considered in \cite{EW} and \cite{DMYZ}, it is natural to investigate whether the De Giorgi conjecture for Allen-Cahn equations is true in the
half-space. In \cite{HLSWW}, Hamel-Liu-Sicbaldi-Wang-Wei proved a half-space rigidity result under the assumption that the zero level set of the solution is contained in a half-space. Specifically, they showed that whenever $ n \leq 3 $ and $ u $ is a non-constant solution of \eqref{eq:Al-Ca} with the zero  level set contained in a half-space, $ u $ is one-dimensional. Farina and Valdinoci studied overdetermined problems for Allen-Cahn equations in \cite{FV} and subsequently obtained the half-space De Giorgi conjecture in dimensions $2$ and $3$. Note that in both results, the level sets of solutions are parallel to the boundary of half-space. In this paper, we obtain a different type of half-space rigidity result for Allen-Cahn equations, where the level sets of solutions of Allen-Cahn equations with double well potentials are allowed to have any fixed intersection angle in $(0, \frac{\pi}{2}]$ with the boundary of the half-space $\mathbb{R}^{n}_{+}$ under assuming 
$\partial_{x_n} u>0$ and the limiting condition in \eqref{eq:lim-assu}.

To state our result in this paper,  we consider general Allen-Cahn energy functional
\begin{equation}\label{eq:GL-ener}
    J(u,\Omega):=\int_{\Omega} \frac{1}{2}|\nabla u|^2+W(u)\,\mathrm{d}x,\quad |u|\leq 1,
\end{equation}
where $W$ is a double-well potential achieving minimum at $1$ and $-1$  and satisfying
\begin{equation}\label{W1}
        W\in C^2([-1,1]),\quad W(-1)=W(1)=0,\quad W>0\text{ on }(-1,1),
        \end{equation}
\begin{equation}\label{W2}
        W'(-1)=W'(1)=0,\quad W''(-1)>0,\quad W''(1)>0.
        \end{equation}
The Euler-Lagrange equation of Allen-Cahn energy functional \eqref{eq:GL-ener} is
\begin{equation}\label{eq:Al-Ca-W}
    \Delta u=W'(u),
\end{equation}
and in \eqref{eq:Al-Ca} $W(s)=\frac{1}{4}(1-s^2)^2$ is the classical double-well potential.

If we define
\begin{equation}\label{g}
H_0(s):=\int_0^s\frac{1}{\sqrt{2W(\xi)}}\mathrm{~d}\xi,\quad\text{and}\quad g_0(t):=H_0^{-1}(t),
\end{equation}
then we find that 
$$g_0''(t)=W'(g_0),$$
and $g_0$ is the unique strictly increasing solution of \eqref{eq:Al-Ca-W} and is called as the one-dimensional solution of \eqref{eq:Al-Ca-W}.
On the other hand, a solution of \eqref{eq:Al-Ca-W} in $\mathbb{R}^n$ with parallel flat level sets must be of the form
    $u(x)= g_0(a\cdot x+c)$,
where $c\in\mathbb{R}$, and $a$ is any unit vector in $\mathbb{R}^n$. 

Now, we recall that we define $\mathbb{R}^{n}_{+}:=\{(x_1,x_2,\cdots,x_n)\in\mathbb{R}^{n}:\,x_1>0\}$, $\partial\mathbb{R}^{n}_{+}=\{(x_1,x_2,\cdots,x_n)\in\mathbb{R}^{n}: x_1=0\}$ and we have the following main result about half-space Bernstein theorem for Allen-Cahn equations.


\begin{thm}\label{thm:main}
    If $u\in C^2\left(\mathbb{R}^{n}_{+}\right)\cap C\left(\overline{\mathbb{R}^{n}_{+}}\right)$ is a solution to 
    \begin{equation}\label{eq:half-Al-Ca}
        \left\{
        \begin{aligned}
            &\Delta u=W'(u) && \text{in}&& {\mathbb{R}^{n}_{+}},\\
            &u(x)=g_0(a\cdot x) &&\text{on} && \partial\mathbb{R}^{n}_{+}=\{x_1=0\},\\
            &|u|\leq 1, \:\partial_{x_n}u>0&&\text{in}&& \overline{\mathbb{R}^{n}_{+}},
        \end{aligned}
        \right.
    \end{equation}
    where double well potential $W$ satisfies \eqref{W1}-\eqref{W2}, $a:=(a_1,a_2,\cdots,a_n)$ is any unit vector in $\mathbb{R}^n$ with $a_n>0$. Assuming further that $u$ satisfies the limiting condition\footnote{By the example constructed by Andersson in \cite{An},  the limiting condition assumption \eqref{eq:half-lim-assu}  in Theorem \ref{thm:main} is necessary when $a_1\not =0$. Indeed, Andersson's non-one-dimensional counterexample to half-space De Giorgi’s conjecture for \eqref{eq:half-Al-Ca} with $W'(u)=u^3-u$ in $\mathbb{R}^{2}_{+}$ has the boundary condition $g_{0}(a_1x_1+a_2x_2)$ with $g_0(x)=\textrm{tanh}(\frac{x}{\sqrt{2}})$ and $(a_1, a_2)=(\frac{\sqrt{2}}{2}, \frac{\sqrt{2}}{2})$  and his argument for showing existence of counter example does not hold when $a_1=0$. It would be interesting to know if limiting condition \eqref{eq:half-lim-assu} Theorem \ref{thm:main} can be removed or not when $a_1=0$.}
    \begin{equation}\label{eq:half-lim-assu}
        \lim_{x_n\to\pm\infty}u(x',x_n)=\pm 1
    \end{equation}
    pointwise for any $x' \in \mathbb{R}^{n-1}_{+}$, then we have that $u$ must be one of the following one-dimensional solution:
    $$u(x)=g_0(\pm a_1x_1+a_2x_2+\cdots+a_nx_n).$$
    In particular, if the above $a=(0,a_2,\cdots,a_n)$ is any unit vector in $\{0\}\times\mathbb{R}^{n-1}$ with $a_n>0$, i.e. boundary condition function $ g_0(a_2x_2+\cdots+a_nx_n)$ is a one-dimensional solution to the equation \eqref{eq:Al-Ca-W} on $\mathbb{R}^{n-1}$, then $u(x)= g_0(a_2x_2+\cdots+a_nx_n)$ is the one-dimensional solution whose level sets are orthogonal to the boundary of the half-space.
\end{thm}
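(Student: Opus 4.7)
The plan is to combine a blow-down analysis for Allen-Cahn in the half-space with a half-space Bernstein theorem for graphical minimal hypersurfaces, and then to lift the resulting flatness of the blow-down back to $u$ itself by a sliding argument against explicit one-dimensional candidates.

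First, I would use the monotonicity $\partial_{x_n}u>0$ and the limiting condition \eqref{eq:half-lim-assu} to show that for each $\lambda\in(-1,1)$ the level set $\{u=\lambda\}$ is an entire graph $\{x_n=\phi_\lambda(x_1,\ldots,x_{n-1})\}$ over $\overline{\mathbb{R}^{n-1}_+}$; the Dirichlet datum pins $\phi_\lambda|_{\{x_1=0\}}$ to the affine function $(H_0(\lambda)-a_2 x_2-\cdots-a_{n-1}x_{n-1})/a_n$. Then I would rescale $u_R(x):=u(Rx)$ and pass to $R\to\infty$. Standard Modica-Mortola $\Gamma$-convergence together with the Modica gradient bound $|\nabla u|^2\le 2W(u)$ and stability (inherited from monotonicity via the Alberti-Ambrosio-Cabré argument) yield that, up to a subsequence, the level sets of $u_R$ converge in Hausdorff distance on compacts to a minimizing hypersurface $\Sigma_\infty\subset\overline{\mathbb{R}^n_+}$. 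Since the boundary data rescale to $\mathrm{sgn}(a_2 x_2+\cdots+a_n x_n)$, the trace $\Sigma_\infty\cap\{x_1=0\}$ is forced to be the fixed hyperplane $\{a_2 x_2+\cdots+a_n x_n=0\}$, and graphicality persists in the limit, so $\Sigma_\infty=\{x_n=\psi(x_1,\ldots,x_{n-1})\}$.

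Next, $\psi$ solves the minimal surface equation in $\mathbb{R}^n_+$ with affine Dirichlet data on $\{x_1=0\}$. Applying the half-space Bernstein theorem for graphical minimal hypersurfaces from Edelen-Wang \cite{EW} and Dong-Maggi-Yan-Zhu \cite{DMYZ}, $\psi$ itself must be affine, so $\Sigma_\infty=\{\nu\cdot x=0\}$ for some unit vector $\nu$. Matching the prescribed trace $\{a_2 x_2+\cdots+a_n x_n=0\}$ forces $\nu_i=a_i$ for $i\geq 2$, and the unit-length constraint then gives $\nu_1=\pm a_1$, selecting one of the two candidate one-dimensional profiles $u_\pm(x):=g_0(\pm a_1 x_1+a_2 x_2+\cdots+a_n x_n)$.

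Finally, I would run a sliding / moving-plane argument in the $x_n$-direction against the selected candidate $u_\pm$. Both $u_\pm$ solve \eqref{eq:half-Al-Ca}, share the Dirichlet datum with $u$, are monotone in $x_n$, and have the correct limits at $x_n=\pm\infty$; by Steps~1-2 they are asymptotic to $u$ at infinity. Sliding $u_\pm(\cdot+te_n)$ with $t$ decreasing from $+\infty$, one has $u_\pm(\cdot+te_n)\geq u$ for $t$ large; at the minimal $t_0$ for which the inequality still holds, the strong maximum principle in $\mathbb{R}^n_+$ together with the Hopf lemma on $\{x_1=0\}$ forces $u\equiv u_\pm(\cdot+t_0 e_n)$, and matching boundary traces collapses $t_0=0$. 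The main obstacle I expect is the blow-down step: one must rule out energy concentration and boundary-layer phenomena along $\{x_1=0\}$, preserve graphicality and stability uniformly up to the boundary, and identify $\Sigma_\infty$ as a bona fide minimizer (not merely stationary) so that the half-space Bernstein theorem legitimately applies. A secondary but delicate point is converting the pointwise limiting condition \eqref{eq:half-lim-assu} into a uniform-in-$x'$ comparison at $x_n=+\infty$ in order to start the sliding argument; the asymptotic flatness produced by Steps~1-2 is precisely what makes this possible.
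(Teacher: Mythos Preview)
Your overall architecture---blow-down the level sets, apply the half-space Bernstein theorem from \cite{EW,DMYZ} to the limit, then upgrade flatness of the blow-down to flatness of $u$---matches the paper. The substantive divergence, and the place where your plan has a genuine gap, is the last step.

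The paper does \emph{not} slide $u_\pm(\cdot+te_n)$ against $u$ on the whole half-space. Instead it introduces the extremal slopes $A_\pm$ so that $\{u=0\}$ is trapped between the hyperplanes $H_\pm=\{A_\pm x_1+a_2x_2+\cdots+a_nx_n=0\}$ through $\Gamma$, observes that any subsequential blow-down $\partial E$ is also trapped between $H_\pm$, and then proves $H_+=\partial E=H_-$ by a \emph{barrier argument at finite scale}: if, say, $H_-$ sat strictly below $\partial E$, one builds an explicit subsolution $w$ (a tilted plane plus a localized bump, exactly as in \cite[Lemma~3.1]{DMYZ}) and uses the viscosity Modica theorem (Proposition~\ref{prop:Modica}) to prevent $\{u_{\varepsilon_k}=0\}$ from dipping below $\mathrm{graph}(w)$. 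This manufactures a hyperplane with slope strictly larger than $A_-$ still lying below $\{u=0\}$, contradicting the definition of $A_-$. Once $A_+=A_-$, the zero level set \emph{is} the hyperplane, and repeating for every level gives one-dimensionality. The sliding in the paper appears only in Lemma~\ref{lem:global-min}, and there it is run on a \emph{bounded} ball to prove that $u$ is a global minimizer (not merely stable); minimality, not stability, is what feeds into Proposition~\ref{prop:Modica}.

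Your proposed sliding of $u_\pm$ against $u$ on all of $\overline{\mathbb{R}^n_+}$ cannot be initiated from the blow-down alone. Subsequential Hausdorff convergence of $\varepsilon_k\{u=0\}$ to a hyperplane only says that $\{u=0\}\cap B_{R_k}$ lies within $o(R_k)$ of that hyperplane; it does \emph{not} say that $\phi_0(x')-(\text{affine})$ is bounded below, which is exactly what ``$u_\pm(\cdot+te_n)\ge u$ for $t$ large'' requires. A level set like $\{x_n=-|x'|^{1/2}\}$ has flat blow-down but admits no $t$ for which a translated plane sits below it everywhere. Even if you could start, on an unbounded domain the first touching point at $t_0$ can escape to infinity, and neither the strong maximum principle nor Hopf applies there. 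The sentence ``the asymptotic flatness produced by Steps~1--2 is precisely what makes this possible'' is the crux, and it is not justified: blow-down flatness is weaker than the uniform one-sided bound you need. The paper's $A_+=A_-$ argument is precisely the device that converts blow-down flatness into flatness of the original level set without ever sliding on an unbounded domain.
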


It is worth noting that, unlike the De Giorgi conjecture proved by Savin \cite{Sa09}, which assumes dimension $n \leq 8$, our main result Theorem \ref{thm:main} holds in all dimensions, just as the affirmative answer to  the half-space Bernstein problem for graphical minimal hypersurfaces with linear boundary conditions also holds in all dimensions (see \cite{EW} or \cite{DMYZ}). Our result also allows the level sets of entire solutions to have any fixed intersection angle in $(0, \frac{\pi}{2}]$ with the boundary of the half-space, in contrast to the level sets of entire solutions in lower dimensions $n=2, 3$ being parallel to the boundary of the half-space in  \cite{HLSWW} and \cite{FV}.

The key idea for proving Theorem \ref{thm:main} is to extend the blowdown method  in the proof of the half-space Bernstein theorem for anisotropic minimal graphs (see \cite{DMYZ}), to Savin's framework \cite{Sa09} for Allen-Cahn equations. Using the linear boundary condition, limiting condition \eqref{eq:half-lim-assu} and $\partial_{x_n}u>0$, we show that solution is minimizer of Allen-Cahn energy. While the level sets of Allen-Cahn energy minimizing solution to \eqref{eq:half-Al-Ca} generally do not satisfy any equation, we can still show that in some weak sense there are equations that can be satisfied. By passing to the limit of rescaled solutions to \eqref{eq:half-Al-Ca}, we obtain that the limiting level set satisfies the graphical minimal hypersurface equation in the viscosity sense. Therefore, we can apply results of half-space Bernstein problem for graphical minimal hypersurfaces with linear boundary conditions from \cite{EW} or \cite{DMYZ} to conclude that this level set must be a graphical half hyperplane. Then, similar to the blowdown method in \cite{DMYZ}, we bound the level sets of solutions to \eqref{eq:half-Al-Ca} by two critical hyperplanes with the same boundary. Using a barrier argument similar to the Hopf-type lemma proved in \cite[Lemma 3.1]{DMYZ} and sliding method, we show that these two hyperplanes must coincide with the hyperplane obtained in the limit. Consequently, the level sets are flat. Since this procedure holds for all level sets, we conclude that the solutions to \eqref{eq:half-Al-Ca} with limiting condition \eqref{eq:half-lim-assu} are one-dimensional.


The structure of this paper is organized as follows. In Section \ref{sec:lems}, we present some lemmas that will be used in the proof of the main theorem. In Section \ref{sec:pfs}, we give the proof of Theorem \ref{thm:main}.

\vskip 10pt
\noindent\textbf{Acknowledgments.}
W. Du acknowledges the support of
the MIT. L. Wang acknowledges his PhD supervisor, Professor Bin Zhou, for his constant encouragement and support. Y. Yang acknowledges the support of
the Johns Hopkins University Provost’s Postdoctoral Fellowship Program. 


\section{Preliminary}\label{sec:lems}


In this section, we present some lemmas that will be used in the proof of the main theorem. The first lemma is a minor modification of the Modica theorem in viscosity sense proved in \cite[Proposition 5.1]{Sa09}, which says that the $0$ level set of a local minimizer of \eqref{eq:GL-ener} satisfies the mean curvature equation in some weak viscosity sense, where the size of the neighborhood around the touching point must be specified.

\begin{prop}[Modica theorem-viscosity sense]\label{prop:Modica}
    Let $u$ be a minimizer of \eqref{eq:GL-ener} and assume that $u(0)=0$. Consider the graph of a $C^2$ function
    $$\Gamma=\left\{(x',x_n):\,x_n=w(x'),\, w(0')=0,\,Dw(0')=0\right\}$$
    that satisfies
    \begin{equation}\label{eq:posi-curv}
        \Delta w(0')>\delta_0\|D^2w(0')\|,\quad \|D^2w(0')\|<\delta_0^{-1},
    \end{equation}
    at the origin $0'\in \mathbb{R}^{n-1}$ for some $\delta_0>0$ small. Let $u_\varepsilon(x):=u(\varepsilon^{-1}x)$ be minimizer of 
    \begin{equation*}
        J(u_{\varepsilon},\Omega):=\int_{\Omega} \frac{\varepsilon }{2}|\nabla u|^2+\frac{W(u)}{\varepsilon}\,\mathrm{d}x,\quad |u_{\varepsilon}|\leq 1,
    \end{equation*}

    There exists $\sigma_0(\delta_0)>0$ small, such that if $\varepsilon\leq\sigma_0(\delta_0)$ then $\Gamma$ cannot touch from below $\{u_{\varepsilon}=0\}$ at $0$ in a $\delta_0\varepsilon^{\frac{1}{2}}(\Delta w(0'))^{-\frac{1}{2}}$ neighborhood; more explicitly,
    $$\{u_{\varepsilon}=0\}\cap\left\{x_n<w\right\}\cap\left\{|x|<\delta_0\varepsilon^{\frac{1}{2}}(\Delta w(0'))^{-\frac{1}{2}}\right\}\neq \emptyset.$$
\end{prop}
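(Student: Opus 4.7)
The plan is to adapt Savin's argument in \cite[Proposition 5.1]{Sa09}, explicitly tracking how the size of the comparison neighborhood depends on $\varepsilon$ and $\Delta w(0')$. I argue by contradiction: assume $\{u_{\varepsilon}=0\} \cap \{x_n < w(x')\} \cap B_R = \emptyset$ with $R := \delta_0 \varepsilon^{1/2}(\Delta w(0'))^{-1/2}$. Rescale via $\tilde u(y) := u_\varepsilon(\varepsilon y) = u(y)$, so that $\tilde u$ becomes a minimizer of the standard (unrescaled) Allen--Cahn energy $J$. The graph $\Gamma$ transforms into $\tilde\Gamma = \{y_n = \tilde w(y')\}$ with $\tilde w(y') := \varepsilon^{-1} w(\varepsilon y')$, satisfying $\tilde w(0') = 0$, $D\tilde w(0') = 0$, and $D^2 \tilde w(0') = \varepsilon D^2 w(0')$; the rescaled exclusion ball has radius $\rho := R/\varepsilon = \delta_0 \varepsilon^{-1/2}(\Delta w(0'))^{-1/2}$, which tends to $+\infty$ as $\varepsilon \to 0^+$.

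Second I would verify that $\tilde\Gamma$ is a strict mean curvature subsolution uniformly on $B_\rho$. The hypothesis $\Delta w(0') > \delta_0 \|D^2 w(0')\|$ gives $\|D^2 w(0')\| < \Delta w(0')/\delta_0$, hence $|D\tilde w(y')| \leq (\Delta w(0')/\delta_0)\,\varepsilon|y'| \leq \varepsilon^{1/2}(\Delta w(0'))^{1/2}$ on $B_\rho$. Combined with the upper bound $\Delta w(0') \leq (n-1)\delta_0^{-1}$ (forced by $\|D^2 w\| < \delta_0^{-1}$), this gradient is uniformly small once $\varepsilon \leq \sigma_0(\delta_0)$. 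Continuity of $D^2 w$ at $0'$ (noting that $B_\rho$ pulls back to $B_R$ in the original coordinates with $R \to 0$) keeps the rescaled Hessian $D^2 \tilde w$ close to $\varepsilon D^2 w(0')$, so $\Delta \tilde w$ remains of order $\varepsilon \Delta w(0') > 0$ throughout $B_\rho$.

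The heart of the argument is Savin's competitor construction: inside $B_\rho$, replace $\tilde u$ by a translate of the one-dimensional profile $g_0$ along the normal to $\tilde \Gamma$, slid downward by a small $\eta > 0$. The sharp-interface expansion of the Allen--Cahn energy converts the strict positivity of the rescaled mean curvature of $\tilde \Gamma$ into a strict interior energy gain on $B_\rho$, which beats both the boundary gluing cost on $\partial B_\rho$ and the Modica--Mortola error term. The radius $\rho$ is precisely chosen so that $\varepsilon \Delta w(0') \rho^2 = \delta_0^2$ is a dimension-free quantity of order one, quantifying how much the rescaled graph rises above its tangent plane on $B_\rho$ and guaranteeing a strict energy decrease for $\varepsilon \leq \sigma_0(\delta_0)$, contradicting the minimality of $\tilde u$. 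The main obstacle I foresee is making the Modica--Mortola asymptotic expansion of the energy uniform on a ball $B_\rho$ whose radius grows as $\varepsilon \to 0$; Savin's uniform convergence machinery supplies the needed estimates, and the adaptation here is essentially bookkeeping to track the explicit dependence of the error constants on $\rho$, $\Delta w(0')$, and $\delta_0$ so that the stated radius $R$ emerges naturally from the balance of leading-order gain and subleading errors.
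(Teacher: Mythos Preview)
Your plan is viable but takes a substantially longer route than the paper. The paper does not re-run Savin's competitor construction; instead it reduces the general $C^2$ case to Savin's already-proved quadratic case \cite[Proposition~5.1]{Sa09} in a few lines. Concretely, one sets
\[
P(x')=\tfrac{1}{2}\,x'^{T}D^2w(0')\,x'-\tfrac{t}{2}|x'|^2
\]
for small $t>0$, checks that $P$ still satisfies \eqref{eq:posi-curv} with the same $\delta_0$, applies Savin's result directly to the quadratic $P$, and then uses the elementary inclusion $\{x_n<P\}\subset\{x_n<w\}$ near $0'$ (since $P$ lies strictly below the second-order Taylor polynomial of $w$) to transfer the conclusion to $w$. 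This is the standard viscosity-solution device of replacing a $C^2$ test function by its Taylor quadratic.

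Your approach instead reproduces Savin's energy-comparison argument from scratch for the general $w$, tracking the scales explicitly. The outline is sound and your scale identification $\varepsilon\,\Delta w(0')\,\rho^{2}=\delta_0^{2}$ is exactly right, but the phrase ``essentially bookkeeping'' understates the work: Savin's competitor and the accompanying energy expansion occupy several pages of delicate estimates. Note also that your appeal to continuity of $D^2w$ on $B_R$ makes $\sigma_0$ implicitly depend on the modulus of continuity of $D^2w$, whereas for a pure quadratic the Hessian is constant and Savin's $\sigma_0$ genuinely depends only on $\delta_0$. The paper's reduction buys all of Savin's machinery for free and isolates the only new content---passing from quadratics to $C^2$ graphs---as a one-line touching argument.
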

\begin{proof}
    In \cite[Proposition 5.1]{Sa09}, Savin proved this statement with $w(x')=\frac{1}{2}x'^{T}Mx'$. Following a similar argument based on the equivalent definition of viscosity solutions, we can prove this statement as well. The details are given below. 

    Let
    $$P(x')=\frac{1}{2}x'^{T}D^2w(0') x'-\frac{t}{2}|x'|^2.$$
    Then by \eqref{eq:posi-curv}, we can choose sufficient small $t>0$ such that
    $$\Delta P>\delta_0\|D^2w(0')-tI\|,\quad \|D^2w(0')-tI\|<\delta_0^{-1}.$$
    Hence by \cite[Proposition 5.1]{Sa09}, we know that there exists $\sigma_0(\delta_0)>0$ small, such that if $\varepsilon\leq\sigma_0(\delta_0)$ then
    $$\{u_{\varepsilon}=0\}\cap\left\{x_n<P\right\}\cap\left\{|x|<\delta_0\varepsilon^{\frac{1}{2}}(\Delta P)^{-\frac{1}{2}}\right\}\neq \emptyset.$$
    Since $\{x_n<P\}\subset\{x_n<w\}$ and a little perturbation of $t$, we have
    $$\{u_{\varepsilon}=0\}\cap\left\{x_n<w\right\}\cap\left\{|x|<\delta_0\varepsilon^{\frac{1}{2}}(\Delta w(0'))^{-\frac{1}{2}}\right\}\neq \emptyset,$$
    which is $\Gamma$ cannot touch from below $\{u_{\varepsilon}=0\}$ at $0$ in a $\delta_0\varepsilon^{\frac{1}{2}}(\Delta w(0'))^{-\frac{1}{2}}$ neighborhood.
\end{proof}
Proposition \ref{prop:Modica} shows that the level set $\{u_{\varepsilon}=0\}$ is, in some sense, a viscosity supersolution of the minimal surface equation. Of course, by similar argument in Proposition \ref{prop:Modica}, the level set $\{u_{\varepsilon}=0\}$ is also a viscosity subsolution of the minimal surface equation.

As a corollary of the above lemma, we conclude that if $ \{u_{\varepsilon} = 0\} $ converges uniformly to a surface, then this surface satisfies the zero mean curvature equation in the viscosity sense. We state this result precisely below.
\begin{lem}[{\cite[Theorem 2.3]{SV}}]\label{lem:v-solu}
    Let $u$ be a minimizer of \eqref{eq:GL-ener} and $u_{\varepsilon}:=u(\varepsilon^{-1}x)$. If $u_{\varepsilon}$ converges in $L^1_{loc}$ to $\chi_{E}-\chi_{\mathbb{R}^n\backslash E}$ and $\{u_{\varepsilon} = 0\}$ converges locally uniformly to $S:=\partial E$, then $S$ satisfies the zero mean curvature equation in the viscosity sense.
\end{lem}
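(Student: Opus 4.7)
The plan is to deduce the viscosity equation $H(S)=0$ from Proposition \ref{prop:Modica} by transferring touching configurations at the limit surface $S$ to touching configurations at the approximating zero level sets $\{u_\varepsilon=0\}$, using the hypothesis of local uniform convergence. Both the super- and subsolution Modica-type touching statements are established by the proof of (and remark after) Proposition \ref{prop:Modica}; consequently it is enough to describe the supersolution case in detail, the subsolution case being entirely analogous.

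Let $w\in C^2$ satisfy $w(0')=0$, $Dw(0')=0$, and assume that the graph $\Gamma=\{x_n=w(x')\}$ touches $S=\partial E$ from below at the origin; I want to show $\Delta w(0')\leq 0$, and argue by contradiction. Assuming $\Delta w(0')>0$, I first strictify the touching via the perturbation $w_\eta(x'):=w(x')-\eta|x'|^2$ for small $\eta>0$: the perturbed graph $\Gamma_\eta$ still touches $S$ from below at $0$, strictly on $\overline{B_r(0)}\setminus\{0\}$ for some $r>0$, and for $\eta$ small one can pick $\delta_0>0$ with
$$\Delta w_\eta(0')>\delta_0\|D^2 w_\eta(0')\|,\qquad \|D^2 w_\eta(0')\|<\delta_0^{-1}.$$
Using $\{u_\varepsilon=0\}\to S$ locally uniformly, for each small $\varepsilon>0$ I slide $\Gamma_\eta$ vertically by a height $s_\varepsilon\to 0$ until it first contacts $\{u_\varepsilon=0\}$ from below at some $p_\varepsilon\in B_r(0)$, with $p_\varepsilon\to 0$. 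Translating $p_\varepsilon$ to the origin and applying a small rotation that makes the tangent plane of the shifted graph at $p_\varepsilon$ horizontal (both operations preserve the minimality of the transformed function by the rigid-motion invariance of the Allen-Cahn energy \eqref{eq:GL-ener}), I obtain a new graph function $\tilde w_\varepsilon$ with $\tilde w_\varepsilon(0')=0$, $D\tilde w_\varepsilon(0')=0$, touching from below the zero level set of the transformed minimizer at $0$. Because $w_\eta\in C^2$, $p_\varepsilon\to 0$, and the required tilt angle is $O(|Dw_\eta(p_\varepsilon')|)\to 0$, the two curvature inequalities above persist for $\tilde w_\varepsilon$ with an arbitrarily small perturbation of $\delta_0$. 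Applying Proposition \ref{prop:Modica} with $\varepsilon\leq\sigma_0(\delta_0)$ then rules out this touching in a fixed neighborhood of the new origin, producing the desired contradiction; the subsolution property follows by the symmetric argument indicated after Proposition \ref{prop:Modica}.

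The main obstacle is the last bookkeeping step: after recentering at $p_\varepsilon$ and tilting to restore the hypothesis $Dw(0')=0$ of Proposition \ref{prop:Modica}, one must verify that the pointwise curvature inequalities still hold at the new origin with essentially the same $\delta_0$. This is a $C^2$-stability statement for $w_\eta$ combined with the smallness of the tilt angle, and it is the one place where the quantitative parameters have to be chosen in a specific order (first $\eta$ small so that $w_\eta$ is a strict subtouch, then $\delta_0$ small based on the curvature data of $w_\eta$ at $0$, and finally $\varepsilon\leq\sigma_0(\delta_0)$). Once this ordering is set up, the conclusion is immediate from Proposition \ref{prop:Modica} and its subsolution analogue.
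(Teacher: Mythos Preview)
Your proposal is correct and follows precisely the route the paper itself indicates in the paragraph preceding the lemma---deriving the result as a corollary of Proposition~\ref{prop:Modica} via a strict-perturbation, slide-and-touch argument---whereas the paper's own proof simply cites \cite[Theorem~2.3]{SV} for the case $p=2$ without supplying any details. Your bookkeeping of the order of parameter choices ($\eta$, then $\delta_0$, then $\varepsilon\le\sigma_0(\delta_0)$) and the $C^2$-stability of the curvature inequalities under recentering and small tilt is exactly what is needed to turn the informal word ``corollary'' into an honest argument.
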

\begin{proof}
    This result was proved in \cite[Theorem 2.3]{SV} for general $ p $-Laplace phase transitions; here, we apply it for the case $ p = 2 $.
\end{proof}

The final lemma in this section addresses interior gradient estimates for viscosity solutions to the minimal surface equation. This result will later be used to show that viscosity solutions to the minimal surface equation are, in some sense, classical.

\begin{lem}[\textit{a priori} estimate of the gradient]\label{lem:int-gra-est}
    Let $\gamma$ be a viscosity solution to the minimal surface equation, 
    $$\sum_{i,j=1}^{n}\left(\delta_{ij}-\frac{D_i\gamma D_j\gamma}{1+|D\gamma|^2}\right)D_{ij}\gamma=0$$
    in $B_R(x_0)$. Then there exists a constant $C>0$ such that
    \begin{equation}\label{eq:int-grad-est}
        \sup_{B_{R/2}(x_0)}|D\gamma|\leq\exp\left[C\left(1+\frac{\sup_{B_R(x_0)}\gamma-\gamma(x_0)}{R}\right)\right].
    \end{equation}
\end{lem}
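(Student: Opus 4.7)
The plan is to reduce to the classical case via approximation. For $C^3$ classical solutions of the minimal surface equation, the estimate \eqref{eq:int-grad-est} is the celebrated interior gradient bound of Bombieri--De Giorgi--Miranda. Its proof runs a Bernstein-type computation: one differentiates the equation once and shows that the auxiliary function $\varphi := \log\sqrt{1+|D\gamma|^2}$ satisfies a suitable differential inequality with respect to the linearized minimal surface operator, after which the pointwise bound follows by the maximum principle applied with a carefully chosen cutoff supported in $B_R(x_0)$.

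To extend this estimate to viscosity solutions, my plan is as follows. For $r<R$ slightly smaller, I would solve the Dirichlet problem for the minimal surface equation on $B_r(x_0)$ with smooth boundary datum $g_\delta$ obtained by mollifying $\gamma|_{\partial B_r(x_0)}$. Since balls are strictly mean-convex and $g_\delta$ is smooth, classical existence theory for the minimal surface equation provides a unique smooth solution $\gamma_{r,\delta}$. Applying the classical Bombieri--De Giorgi--Miranda estimate to each $\gamma_{r,\delta}$ on $B_r(x_0)$ yields a bound of the form \eqref{eq:int-grad-est} with constants depending only on the supremum of $g_\delta$, hence (by uniform convergence $g_\delta\to \gamma|_{\partial B_r}$) independent of $\delta$.

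Next one passes to the limit, first $\delta\to 0$ and then $r\nearrow R$. The uniform gradient bound makes the family $\{\gamma_{r,\delta}\}$ uniformly Lipschitz on compact subsets of $B_R(x_0)$; on such sets the minimal surface equation becomes uniformly elliptic along the sequence, and standard elliptic regularity produces uniform $C^{1,\alpha}$ estimates. Extracting a subsequence converging in $C^1_{\mathrm{loc}}(B_R(x_0))$ to some $\tilde\gamma$, we see that $\tilde\gamma$ is a classical solution of the minimal surface equation with the same boundary values as $\gamma$. The comparison principle for viscosity sub- and supersolutions of MSE then forces $\tilde\gamma=\gamma$, so $\gamma\in C^1_{\mathrm{loc}}(B_R(x_0))$ and \eqref{eq:int-grad-est} is preserved in the limit.

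The main obstacle I expect is the last step: identifying the $C^1_{\mathrm{loc}}$-limit of the approximating classical solutions with the original viscosity solution $\gamma$. This relies on a viscosity comparison principle for MSE with matching continuous boundary data, which is standard but requires care because the equation is only degenerate elliptic and because the boundary values of $\gamma$ are a priori merely continuous; one must therefore verify sufficient uniform continuity up to $\partial B_r(x_0)$ for the comparison to be applied along the approximation.
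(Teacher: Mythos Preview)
Your proposal is correct and follows the same approach as the paper: reduce to the classical Bombieri--De Giorgi--Miranda estimate for smooth solutions and extend to viscosity solutions by an approximation argument. The paper's proof is in fact just a pair of citations (to \cite{BDM} for the smooth case and to Wang \cite{Wa} for the approximation to the viscosity setting), so your outline is already more detailed than what appears there.
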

\begin{proof}
    This estimate was first derived under the assumption $\gamma \in C^2$ by Bombieri, De Giorgi, and Miranda \cite{BDM}. Since then, several simpler and more modern proofs have been provided by various authors. For the viscosity solution case, Wang stated in \cite[Theorem 1.1]{Wa} that the estimate can be achieved via an approximation argument.
\end{proof}

\section{Proof of the main theorem}\label{sec:pfs}

In this section, we give the proof of Theorem \ref{thm:main}. Before doing so, we first show that a function $u$ satisfying \eqref{eq:half-Al-Ca} and \eqref{eq:half-lim-assu} is a global minimizer in $\mathbb{R}^{n}_{+}$. The idea is similar to the proof of \cite[Theorem 2.4]{Sa09}, with some details adapted from the proof of \cite[Lemma 9.1]{VSS}.

\begin{lem}\label{lem:global-min}
    If $u\in C^2\left(\mathbb{R}^{n}_{+}\right)\cap C\left(\overline{\mathbb{R}^{n}_{+}}\right)$ satisfies \eqref{eq:half-Al-Ca} and \eqref{eq:half-lim-assu}, then $u$ is a global minimizer of \eqref{eq:GL-ener} in $\mathbb{R}^{n}_{+}$.
\end{lem}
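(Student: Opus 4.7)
My plan is to compare $u$ on every bounded smooth subdomain $\Omega\subset\overline{\mathbb{R}^n_+}$ with the direct-method minimizer of $J(\cdot,\Omega)$ that shares its boundary data, and show the two must agree via a sliding argument using the $x_n$-translates $u^\tau(x):=u(x',x_n+\tau)$. The key inputs are $\partial_{x_n}u>0$ and the limiting condition \eqref{eq:half-lim-assu}.

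For the setup, fix a bounded smooth $\Omega\subset\overline{\mathbb{R}^n_+}$ and apply the direct method (extending $W$ to $\mathbb{R}$ so that $W\ge 0$ vanishes only at $\pm 1$, whence truncation at $\pm 1$ decreases $J$) to obtain a minimizer $u^*\in H^1(\Omega)$ of \eqref{eq:GL-ener} with $u^*-u\in H^1_0(\Omega)$ and $|u^*|\le 1$. Elliptic regularity makes $u^*\in C^2(\Omega)\cap C(\overline\Omega)$ a classical solution of $\Delta u^*=W'(u^*)$ whose boundary trace is $u$. Once $u^*\equiv u$ on $\overline\Omega$ is proved, then for every competitor $v$ with $v-u\in H^1_0(\Omega)$,
\[
J(u,\Omega)=J(u^*,\Omega)\le J(v,\Omega),
\]
yielding global minimality.

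To prove $u\equiv u^*$, introduce the translates $u^\tau$; each remains a classical solution of \eqref{eq:Al-Ca-W} in $\overline{\mathbb{R}^n_+}$, is strictly increasing in $\tau$ by $\partial_{x_n}u>0$, and by \eqref{eq:half-lim-assu} tends locally uniformly to $\pm 1$ as $\tau\to\pm\infty$. Hence $u^\tau>u^*$ on $\overline\Omega$ for $\tau\gg 1$ and $u^\tau<u^*$ for $\tau\ll -1$. Set
\[
\tau_*:=\inf\{\tau\in\mathbb{R}:\,u^\tau\ge u^*\text{ on }\overline\Omega\},
\]
so $u^{\tau_*}\ge u^*$ on $\overline\Omega$ with equality at some $x_0\in\overline\Omega$. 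The difference $w:=u^{\tau_*}-u^*\ge 0$ satisfies a linear elliptic equation $\Delta w=c(x)w$ with bounded coefficient $c(x):=\int_0^1 W''\bigl(su^{\tau_*}+(1-s)u^*\bigr)\,ds$. If $x_0\in\Omega$, the strong maximum principle forces $w\equiv 0$ on the connected component of $\Omega$ containing $x_0$; continuity up to the boundary then yields $u(y',y_n+\tau_*)=u(y)$ at every boundary point $y$ of that component (the set being nonempty by boundedness). If $x_0\in\partial\Omega$, the identity holds directly at $y=x_0$. Either way, strict monotonicity of $u$ in $x_n$ forces $\tau_*=0$, giving $u\ge u^*$ on $\overline\Omega$. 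A symmetric argument starting from $\tau^*:=\sup\{\tau:u^\tau\le u^*\text{ on }\overline\Omega\}$ yields $u\le u^*$, so $u\equiv u^*$.

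The main delicacy is the touching analysis when $\Omega$ meets $\partial\mathbb{R}^n_+$ so that $x_0$ may lie in $\partial\Omega\cap\partial\mathbb{R}^n_+$, where $u$ is only continuous and no interior strong maximum principle or Hopf-type estimate is directly available. I bypass this because identifying $\tau_*$ does not require any derivative information at $x_0$: it reduces to the single pointwise identity $u(x_0',x_{0,n}+\tau_*)=u(x_0)$, which strict $x_n$-monotonicity immediately upgrades to $\tau_*=0$. The auxiliary checks, namely existence and continuity up to $\overline\Omega$ of $u^*$, the truncation step, and matching of traces, are routine direct-method and elliptic regularity considerations.
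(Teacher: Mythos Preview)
Your proof is correct and follows essentially the same sliding-method approach as the paper: compare $u$ with a direct-method minimizer $u^*$ on a bounded domain, slide the $e_n$-translates $u^\tau$ of $u$ until they touch $u^*$, and use the strong maximum principle together with strict $x_n$-monotonicity at a boundary point to force the shift to be zero. The only cosmetic differences are that you obtain the large-$\tau$ comparison $u^\tau\ge u^*$ via Dini's theorem (monotone pointwise convergence to $\pm1$ on the compact $\overline\Omega$) rather than the gradient-estimate argument the paper uses, and you linearize and apply the standard strong maximum principle in place of the comparison principle from \cite{Da}.
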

\begin{proof}
    Since $\partial_{x_n} u > 0$, i.e. $u$ is strictly increasing, we actually know $|u| < 1$ by \cite[footnote in page 2]{VSS}. Let ${\bf B} \subset {\mathbb{R}^{n}}$ be a closed $n$-dimensional ball, and let $v$ be a minimizer of $J(v, {\bf B}\cap\overline{\mathbb{R}^n_+})$ such that $v = u$ on $\partial({\bf B}\cap\overline{\mathbb{R}^n_+})$. Our goal is to show that $u = v$ in ${\bf B}\cap\overline{\mathbb{R}^n_+}$. If ${\bf B} \cap \partial \mathbb{R}^{n}_{+} = \emptyset$, this follows from \cite[Theorem 2.4]{Sa09} or \cite[Lemma 9.1]{VSS}. Thus, it suffices to consider the case where ${\bf B} \cap \partial \mathbb{R}^{n}_{+} \neq \emptyset$. We will prove this claim by contradiction. Assume that there exists a point $x^{*}\in {\bf B}\cap\overline{\mathbb{R}^n_+}$ such that
    \begin{equation}\label{eq:v>u}
        v(x^{*})>u(x^{*}).
    \end{equation}
    Hence by the boundary assumption $u=v$, we know that $x^{*}$ is in the interior of ${\bf B}\cap\overline{\mathbb{R}^n_+}$.
    
    Note that the boundary condition in \eqref{eq:half-Al-Ca}, which is a one-dimensional solution restricts to $\partial\mathbb{R}^n_+$. By the global gradient estimate for semilinear elliptic equations (with the above boundary condition), we know that for all $ x \in \overline{\mathbb{R}^n_+} $, there exists a constant $ C > 0 $ (does not depend on $ x $), such that $ |\nabla u| \leq C $ in $ {\bf B}_{1}(x) \cap \overline{\mathbb{R}^n_+} $ (see, for example, \cite[Proposition 2.20]{HL}).  Combining $\displaystyle\lim_{x_n\to+\infty}u(x',x_n)=1$, we deduce that
    \begin{equation}\label{ineq}
        u(x+te_n)\geq v(x)
    \end{equation}
    for any $x\in  {\bf B}\cap\overline{\mathbb{R}^n_+}$ provided that $t$ is large enough. Indeed, if this is not true, we have $u(x_t+te_n)<v(x_t)$ for some $x_t \in {\bf B}\cap\overline{\mathbb{R}^n_+}$ and a diverging sequence of $t$. Note that there is $\alpha > 0$ so that $v \leq 1 - \alpha$. Then, up to subsequence, we may assume that $x_t$ converges to $x_{\infty}\in {\bf B}\cap\overline{\mathbb{R}^n_+}$, then
    \begin{align*}
        1&=\lim_{t\to+\infty}u(x_{\infty}+te_n)\\
        &=\lim_{t\to+\infty}\left[u(x_{\infty}+te_n)-u(x_t+te_n)+u(x_t+te_n)\right]\\
        &\leq \lim_{t\to+\infty}\left[u(x_t+te_n)+C|x_{\infty}-x_t|\right]\\
        &=\lim_{t\to+\infty}u(x_t+te_n)\\
        &\leq \lim_{t\to+\infty}v(x_t)\\
        &\leq 1-\alpha,
    \end{align*}
    which make a contradiction. Thanks to the inequality (\ref{ineq}), we thus slide $u(\cdot + te_n)$ along the $e_n$-direction until we touch $v$ from above for the first time. Say this happen at $\bar{x}\in {\bf B}\cap\overline{\mathbb{R}^n_+}$ for $t=\bar{t}$. Then by \eqref{eq:v>u}, we have
    $$u(x^{*}+\bar{t}e_n)\geq v(x^{*})>u(x^{*}),$$
    thence, since $u$ is strictly increasing in the $e_n$-direction (in ${\bf B} \cap  \mathbb{R}^{n}_{+}$, we use $\partial_{x_n} u>0$ and in ${\bf B} \cap \partial \mathbb{R}^{n}_{+}$ we use one-dimensional strictly increase boundary condition $u(x)=g_0(a\cdot x)$ with $a_n>0$), we know that $\bar{t}>0$.
    Since now $\partial_{x_n}u> 0$ we have that $\nabla u(\cdot+\bar{t}e_n)\neq 0$. Therefore, it follows that the assumptions of the Strong Comparison Principle for quasilinear degenerate elliptic equations in \cite[Theorem 1.4]{Da} applies to $u(\cdot+\bar{t}e_n)$ and $v(\cdot)$ and so this touching point must occur on $\partial ({\bf B}\cap\overline{\mathbb{R}^n_+})$, that is $\bar{x}\in\partial ({\bf B}\cap\overline{\mathbb{R}^n_+})$. Since $u = v$ on $\partial ({\bf B}\cap\overline{\mathbb{R}^n_+})$, it follows that $v(\bar{x}) = u(\bar{x})$. Note that as above, by $\partial_{x_n}u>0$ and one-dimensional strictly increase boundary condition $u$ is strictly increasing in the $e_n$-direction. Hence $$u(\bar{x})=v(\bar{x})=u(\bar{x}+\bar{t}e_n)>u(\bar{x}),$$
    contradiction, which means that \eqref{eq:v>u} cannot hold. Hence $u\geq v$. Analogously, one can see that $u \leq v$, thence $u = v$.
\end{proof}

Since we have established the minimality of $ u $, we can combine the Modica theorem with the half-space Bernstein theorem for minimal surfaces to conclude that the level sets of rescaled solutions to \eqref{eq:Al-Ca} locally uniformly converge to 
hyperplanes in all dimensions.
\begin{lem}\label{lem:converges}
    Let $u\in C^2\left(\mathbb{R}^{n}_{+}\right)\cap C\left(\overline{\mathbb{R}^{n}_{+}}\right)$ be a solution to \eqref{eq:half-Al-Ca} with \eqref{eq:half-lim-assu}, and $u_{\varepsilon}(x):=u(\varepsilon^{-1}x)$. Then there exists a subsequence $\{\varepsilon_k\}\to 0$ such that the level sets
    $$\left\{x\in\overline{\mathbb{R}^{n}_{+}}:u_{\varepsilon_k}(x)=0\right\}=\varepsilon_k\left\{x\in\overline{\mathbb{R}^{n}_{+}}:u(x)=0\right\}$$
    converge uniformly on compact sets to a graphical half-hyperplane.
\end{lem}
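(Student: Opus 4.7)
My plan is to combine Lemma \ref{lem:global-min} with Savin's compactness theory for Allen--Cahn minimizers to pass to a blowdown limit, use Proposition \ref{prop:Modica} and Lemma \ref{lem:int-gra-est} to identify the limit as a classical minimal graph, and then invoke the half-space Bernstein theorem for graphical minimal hypersurfaces with linear boundary data from \cite{EW} or \cite{DMYZ}. By Lemma \ref{lem:global-min}, $u$ is a global minimizer of \eqref{eq:GL-ener} on $\mathbb{R}^{n}_{+}$, so each $u_\varepsilon(x)=u(\varepsilon^{-1}x)$ minimizes the rescaled functional on $\mathbb{R}^n_+$ with boundary value $g_0(a\cdot x/\varepsilon)$ on $\{x_1=0\}$. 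Using the one-dimensional profile $g_0(a\cdot x/\varepsilon)$ as a competitor cut off near $\partial B_R$ yields the standard Modica--Mortola energy bound $J_\varepsilon(u_\varepsilon,B_R\cap\overline{\mathbb{R}^n_+})\leq C R^{n-1}$, and Savin's density estimates then produce a subsequence $\varepsilon_k\to 0$ along which $u_{\varepsilon_k}\to \chi_E-\chi_{\mathbb{R}^n_+\setminus E}$ in $L^1_{loc}$ and $\{u_{\varepsilon_k}=0\}$ converges locally uniformly in Hausdorff distance to $S:=\partial E\cap\overline{\mathbb{R}^n_+}$.

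Next, I would use $\partial_{x_n}u>0$ together with \eqref{eq:half-lim-assu} to write $\{u=0\}=\{x_n=\phi(x')\}$ for a continuous function $\phi$ defined on $\overline{\mathbb{R}^{n-1}_+}$, so the rescaled zero sets are graphs of $\phi_\varepsilon(x'):=\varepsilon\phi(x'/\varepsilon)$. Monotonicity in $x_n$ is preserved under the $L^1_{loc}$ limit, so $E$ is a subgraph, $S=\{x_n=\phi_\infty(x')\}$ is itself graphical, and the Hausdorff convergence of level sets upgrades to $\phi_{\varepsilon_k}\to\phi_\infty$ locally uniformly. Along $\{x_1=0\}$, the condition $u=g_0(a\cdot x)$ pins the zero level set to the affine subspace $\{a_2x_2+\cdots+a_nx_n=0\}$ of $\partial \mathbb{R}^n_+$; this set is invariant under every dilation, so each $\phi_\varepsilon|_{x_1=0}$ coincides with the linear function $-a_n^{-1}(a_2x_2+\cdots+a_{n-1}x_{n-1})$, and hence so does the boundary trace of $\phi_\infty$.

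To finish, Proposition \ref{prop:Modica} together with Lemma \ref{lem:v-solu} applied to $u_{\varepsilon_k}$ on interior balls shows that $S$ is a viscosity solution of the minimal surface equation in the interior of $\mathbb{R}^n_+$, hence $\phi_\infty$ is a viscosity solution of the graphical minimal surface equation on $\mathbb{R}^{n-1}_+$. Lemma \ref{lem:int-gra-est} then supplies interior gradient bounds on $\phi_\infty$, which upgrade it to a classical $C^2$ solution by standard elliptic regularity. The half-space Bernstein theorem of \cite{EW} or \cite{DMYZ}, applied to $\phi_\infty$ with its fixed linear boundary trace, forces $\phi_\infty$ to be affine, and therefore $S$ is a graphical half-hyperplane, as claimed. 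The step I expect to be the most delicate is ensuring that $\phi_\infty$ is everywhere finite and continuous up to $\{x_1=0\}$, i.e.\ that the graphs $\phi_{\varepsilon_k}$ do not escape to $\pm\infty$ on any portion of $\overline{\mathbb{R}^{n-1}_+}$; I plan to control this by combining the invariant linear boundary trace above with a barrier argument based on the explicit one-dimensional profile $g_0(a\cdot x/\varepsilon)$ and the interior gradient estimate of Lemma \ref{lem:int-gra-est} propagated inward from the boundary.
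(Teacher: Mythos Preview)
Your proposal is correct and follows essentially the same route as the paper: minimality via Lemma~\ref{lem:global-min}, Modica--Mortola compactness to obtain a subgraph limit $E$, Lemma~\ref{lem:v-solu} to identify $\partial E$ as a viscosity minimal graph, Lemma~\ref{lem:int-gra-est} plus De Giorgi--Nash--Moser and Schauder to upgrade to a classical solution, and finally the half-space Bernstein theorem of \cite{EW,DMYZ}. The paper additionally cites the Caffarelli--C\'ordoba density estimates \cite{CC} to pass from $L^1_{loc}$ convergence to locally uniform convergence of the zero sets, and defers the graphicality of the limit to \cite[pp.~80--81]{VSS}; your direct argument via preserved monotonicity in $x_n$ is equivalent. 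The finiteness-of-$\phi_\infty$ issue you flag is handled in the paper simply by allowing $\gamma_*:\overline{\mathbb{R}^{n-1}_+}\to[-\infty,+\infty]$ and implicitly relying on the perimeter-minimizing structure from \cite{VSS}; your proposed barrier based on the invariant linear trace is a reasonable way to make this explicit.
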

\begin{proof}
By the minimality of $ u $ (see Lemma \ref{lem:global-min}) and the Modica theorem, we know that $ u_\varepsilon $ converges in $ L^{1}_{\text{loc}} $ (and thus a.e. converges), up to a subsequence, to $ \chi_E - \chi_{\overline{\mathbb{R}^n_+} \backslash E} $ for a suitable set $ E \subset \overline{\mathbb{R}^n_+} $ with minimal perimeter. Following the arguments in \cite[Pages 80-81]{VSS}, we can conclude that there exists a measurable function $ \gamma_{*}: \overline{\mathbb{R}^{n-1}_+}\to [-\infty,+\infty] $, where $$\mathbb{R}^{n-1}_+:=\{x'=(x_1,x_2,\cdots,x_{n-1})\in\mathbb{R}^{n-1}: \,x_1>0\},$$ such that $ \overline{\mathbb{R}^n_+} \backslash E = \left\{ x \in \overline{\mathbb{R}^n_+} : x_n < \gamma_{*}(x') \right\} $, and that $ \left.\gamma_{*}\right|_{\partial\mathbb{R}^{n-1}_+} $ is linear. By Lemma \ref{lem:v-solu}, we know that $\gamma_{*}$ satisfies the minimal surface equation in the viscosity sense  with linear boundary value conditions. Next we show that $\gamma_{*}$ is smooth in the interior. 

For any $x_0\in\mathbb{R}^{n-1}_+$, we can choose a small enough $r>0$ such that $B_r(x_0)\subset\subset\mathbb{R}^{n-1}_+$. Then by Lemma \ref{lem:int-gra-est}, we know that $|D\gamma_{*}|$ is bounded in $B_{r/2}(x_0)$. Then by approximation argument as in \cite{Tr}, the function $w=D_s\gamma_{*}$, $(s=1,2,\cdots,n-1)$ satisfies the equation
\begin{equation}\label{eq:w}
    D_i\left(a_{ij}(x)D_jw\right)=0
\end{equation}
with
$$a_{ij}(x)=\frac{\delta_{ij}(1+|D\gamma_{*}|^2)-D_i\gamma_{*}D_j\gamma_{*}}{(1+|D\gamma_{*}|^2)^{3/2}}\in L^\infty(B_{r/2}(x_0)).$$
Since we have proved $|D\gamma_{*}|$ is bounded in $B_{r/2}(x_0)$, we have for every $\xi\in\mathbb{R}^{n-1}$
$$\nu|\xi|^2\leq a_{ij}(x)\xi_i\xi_j$$
with some constant $\nu>0$. Thus, equation \eqref{eq:w} is uniformly elliptic. Then by De Giorgi-Nash-Moser theory \cite[Theorem 8.24]{GT}, we know that $w\in C^\alpha(B_{r/4}(x_0))$ for some $\alpha>0$, which is $\gamma_{*}\in C^{1,\alpha}(B_{r/4}(x_0))$. Hence, by the classical Schauder theory \cite[Theorem 6.2]{GT}, we can conclude that $\gamma_{*}$ is smooth at $x_0$.

Therefore, by the half-space Bernstein theorem for minimal graphs in \cite{EW} or \cite{DMYZ}, we can conclude that $\partial E=\left\{ x \in \overline{\mathbb{R}^n_+} : x_n = \gamma_{*}(x') \right\}$ is a graphical half-hyperplane in $\overline{\mathbb{R}^n_+}$. By the boundary condition, we obtain that $\partial E\cap\{x_1=0\}=\left\{u=0\right\}\cap\left\{x_1=0\right\}$. Then thanks to the density estimates of Caffarelli and Cordoba \cite[Page 11]{CC}, we know $\{u_{\varepsilon}=0\}$ $L^\infty_{\text{loc}}$-converges to $\partial E$.
\end{proof}

Next, to apply a similar blowdown method as in \cite{DMYZ}, we need to extend certain notions and lemmas to the level sets of solutions to \eqref{eq:half-Al-Ca}. Given the assumption that $\partial_{x_n} u > 0$, we know that the level sets are graphs along the $x_n$-direction. By the boundary value condition and the definition of $g_0$ in (\ref{g}), we have
\begin{align*}
    \Gamma:&=\left\{u(x)=0\right\}\cap\left\{x_1=0\right\}=\left\{x\in\overline{\mathbb{R}^{n}_{+}}:g_0(a\cdot x)=0,\,x_1=0\right\}\\&=\left\{x\in\overline{\mathbb{R}^{n}_{+}}:a_2x_2+\cdots a_nx_n=0,\,x_1=0\right\}.
\end{align*}
Hence, we define
\begin{equation}\label{eq:def-A}
    \begin{aligned}
        A_{+}:&=\sup\left\{A:\{u=0\}\subset\{x\in\overline{\mathbb{R}^{n}_{+}}:Ax_1+a_2x_2+\cdots+a_nx_n\leq 0\}\right\},\\
        A_{-}:&=\inf\left\{A:\{u=0\}\subset\{x\in\overline{\mathbb{R}^{n}_{+}}:Ax_1+a_2x_2+\cdots+a_nx_n\geq 0\}\right\},
    \end{aligned}
\end{equation}
where $A_{+}\in\mathbb{R}\cup\{-\infty\}$ and $A_{-}\in\mathbb{R}\cup\{+\infty\}$. By definition, it is clear that $A_{+}\leq A_{-}$. To show that $\{u=0\}$ is flat, it suffices to prove that $A_{+}=A_{-}$.

Let $ H_{\pm} $ denote the critical hyperplanes $ \{ A_{\pm} x_1 + a_2 x_2 + \cdots + a_n x_n = 0 \} $. Note that our definition of $ A_{\pm} $ has the opposite sign compared to that in \cite{DMYZ}, but $ H_{\pm} $ are the same. Thus, when $ A_+ = -\infty $, we interpret $ H_+ $ as the closed half-space in $ \{ x_1 = 0 \} $ lying above $ \Gamma $, and we understand $ H_- $ similarly when $ A_- = +\infty $. See Figure \ref{fig:H_pm}.
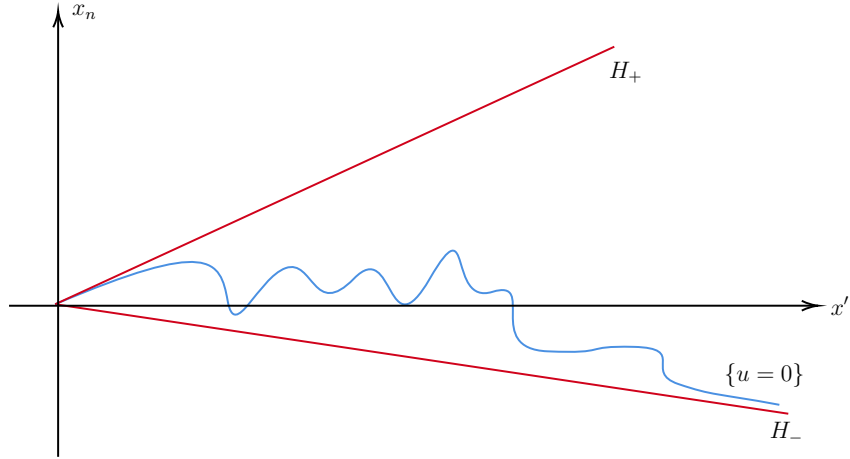
\begin{figure}[htbp]
    \centering
\tikzset{every picture/.style={line width=0.75pt}} 

\begin{tikzpicture}[x=0.75pt,y=0.75pt,yscale=-0.75,xscale=0.75]

\draw [color={rgb, 255:red, 208; green, 2; blue, 27 }  ,draw opacity=1 ][line width=0.75]    (71,205.83) -- (564,279.5) ;
\draw [color={rgb, 255:red, 74; green, 144; blue, 226 }  ,draw opacity=1 ]   (71,205.83) .. controls (175,161.5) and (182,175.5) .. (187,203.5) .. controls (192,231.5) and (207.33,188.17) .. (226,181.5) .. controls (244.67,174.83) and (244.67,216.5) .. (272,188.5) .. controls (299.33,160.5) and (293.67,239.17) .. (323,188.5) .. controls (352.33,137.83) and (334.33,206.83) .. (365,197.5) .. controls (395.67,188.17) and (358.67,235.67) .. (402,237.5) .. controls (445.33,239.33) and (423.67,233.67) .. (460,234.5) .. controls (496.33,235.33) and (466.14,251.38) .. (490,259.5) .. controls (513.86,267.62) and (515,265.5) .. (558,273.5) ;
\draw [color={rgb, 255:red, 208; green, 2; blue, 27 }  ,draw opacity=1 ][line width=0.75]    (71,205.83) -- (447,32.5) ;
\draw    (40,206.83) -- (582,206.8) ;
\draw [shift={(584,206.8)}, rotate = 180] [color={rgb, 255:red, 0; green, 0; blue, 0 }  ][line width=0.75]    (10.93,-3.29) .. controls (6.95,-1.4) and (3.31,-0.3) .. (0,0) .. controls (3.31,0.3) and (6.95,1.4) .. (10.93,3.29)   ;
\draw    (73,308.5) -- (73,10.5) ;
\draw [shift={(73,8.5)}, rotate = 90] [color={rgb, 255:red, 0; green, 0; blue, 0 }  ][line width=0.75]    (10.93,-3.29) .. controls (6.95,-1.4) and (3.31,-0.3) .. (0,0) .. controls (3.31,0.3) and (6.95,1.4) .. (10.93,3.29)   ;

\draw (519.67,244) node [anchor=north west][inner sep=0.75pt]  [xscale=0.75,yscale=0.75] [align=left] {$\displaystyle \{u=0\}$};
\draw (550,283.5) node [anchor=north west][inner sep=0.75pt]  [xscale=0.75,yscale=0.75] [align=left] {$\displaystyle H_{-}$};
\draw (442,41) node [anchor=north west][inner sep=0.75pt]  [xscale=0.75,yscale=0.75] [align=left] {$\displaystyle H_{+}$};
\draw (81,3) node [anchor=north west][inner sep=0.75pt]  [xscale=0.75,yscale=0.75] [align=left] {$\displaystyle x_{n}$};
\draw (591,199) node [anchor=north west][inner sep=0.75pt]  [xscale=0.75,yscale=0.75] [align=left] {$\displaystyle x'$};
\end{tikzpicture}
    \caption{critical hyperplanes $H_{\pm}$}
    \label{fig:H_pm}
\end{figure}

Denote $u_\varepsilon(x) = u(\varepsilon^{-1}x)$, then we have
\begin{equation}\label{eq:u-level}
    \left\{x\in\overline{\mathbb{R}^{n}_{+}}:u_\varepsilon(x)=0\right\}=\varepsilon\left\{x\in\overline{\mathbb{R}^{n}_{+}}:u(x)=0\right\}.
\end{equation}

\begin{lem}\label{lem:A_+=A_-}
    Let $u\in C^2\left(\mathbb{R}^{n}_{+}\right)\cap C\left(\overline{\mathbb{R}^{n}_{+}}\right)$ be a solution to \eqref{eq:half-Al-Ca} with \eqref{eq:half-lim-assu}, then we have $A_+=A_-$.
\end{lem}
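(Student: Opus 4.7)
The plan is to combine the blow-down from Lemma \ref{lem:converges} with a Hopf--sliding argument in the spirit of \cite[Lemma 3.1]{DMYZ}. Writing $\phi_A(x):=A x_1 + a_2 x_2 + \cdots + a_n x_n$, so that $H_A=\{\phi_A=0\}$, the definitions of $A_\pm$ read $\{u=0\}\subset\{\phi_{A_+}\leq 0\}\cap\{\phi_{A_-}\geq 0\}$. First I would extract from Lemma \ref{lem:converges} a blow-down half-hyperplane $\Pi=H_{A_*}\cap\overline{\mathbb{R}^n_+}$ for some $A_*\in\mathbb{R}$. Since $\phi_A$ is linear, these two inclusions are invariant under the rescaling $x\mapsto \varepsilon x$, so passing to the limit along $\varepsilon_k\{u=0\}\to\Pi$ locally uniformly yields $\Pi\subset\{\phi_{A_+}\leq 0\}\cap\{\phi_{A_-}\geq 0\}$. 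Evaluating at any $y\in\Pi\cap\{y_1>0\}$, where $\phi_{A_*}(y)=0$, gives $(A_+-A_*)y_1\leq 0 \leq (A_- - A_*)y_1$, hence $A_+\leq A_*\leq A_-$.

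Now suppose for contradiction that $A_+<A_-$. Up to symmetry (the case $A_*<A_-$ is treated by sliding $H_-$ instead of $H_+$), I may assume $A_+<A_*$. The goal is to produce $A'\in(A_+,A_*)$ such that $\{u=0\}\subset\{\phi_{A'}\leq 0\}$, contradicting the maximality of $A_+$. This reduces to the uniform linear separation $-\phi_{A_+}(y)\geq \eta\,y_1$ for every $y\in\{u=0\}$ with $y_1>0$ and some $\eta>0$. Far from $\Gamma$ this comes for free from the blow-down: on any compact $K\Subset\{x_1>0\}$, $\varepsilon_k\{u=0\}\cap K$ lies in a vanishing neighborhood of $H_{A_*}\cap K$, whose distance from $H_{A_+}$ is bounded below by a positive constant depending on $A_*-A_+$ and $K$; rescaling back gives linear growth of $\mathrm{dist}(\{u=0\},H_{A_+})$ inside any cone $\{y_1\geq c|y|\}$, $c>0$.

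The hard part, and the expected main obstacle, is the region close to $\Gamma$, where $\{u=0\}$ and $H_{A_+}$ are forced to meet and the cone estimate degenerates. Here I would adapt the Hopf-type barrier of \cite[Lemma 3.1]{DMYZ} to the Allen--Cahn framework: at any hypothetical interior touching point $p\in\{u=0\}\cap H_{A_+}$ with $p_1>0$, compare $u$ locally with a tilted, shifted copy of the one-dimensional profile $g_0$ aligned with $H_{A_+}$, perturbed by a small correction, to produce a strict local supersolution to \eqref{eq:Al-Ca-W} whose zero level set lies above $\{u=0\}$ and touches it only at $p$. The Damascelli-type strong comparison principle already invoked in Lemma \ref{lem:global-min}, combined with $\partial_{x_n}u>0$, then forces coincidence of $u$ with the supersolution in a connected neighborhood, which is incompatible with the strictly monotone boundary data $g_0(a\cdot x)$. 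A quantitative boundary version of this barrier upgrades non-touching into the linear lower bound $\mathrm{dist}(\{u=0\},H_{A_+})\geq \eta\, y_1$ near $\Gamma$. Patching with the far-field estimate yields the uniform separation, so sliding $H_{A_+}$ to $H_{A_++\eta/2}$ produces the contradiction; the symmetric sliding of $H_-$ gives $A_*=A_-$, and therefore $A_+=A_-$. The delicate step is the construction of this boundary Hopf barrier in the Allen--Cahn setting, which must reconcile the natural normalization of the one-dimensional profile with the prescribed boundary data when $\sqrt{A_+^2+a_2^2+\cdots+a_n^2}\neq 1$.
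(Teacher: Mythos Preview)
Your overall architecture---blow down via Lemma~\ref{lem:converges} to a half-hyperplane $H_{A_*}$ with $A_+\le A_*\le A_-$, then run a Hopf--sliding argument in the spirit of \cite[Lemma~3.1]{DMYZ} to rule out $A_+<A_*$ (resp.\ $A_*<A_-$)---matches the paper's. The difference is entirely in how the Hopf step is executed, and that is where your proposal has a real gap.

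The paper does \emph{not} build Allen--Cahn supersolutions. It works at the level of the zero set of the rescaled minimizer $u_{\varepsilon_k}$ and uses Proposition~\ref{prop:Modica} (the viscosity Modica theorem) as a black box: a $C^2$ graph $w$ that is a small perturbation of $H_-$ with strictly positive mean curvature cannot touch $\{u_{\varepsilon_k}=0\}$ from below at an interior point once $\varepsilon_k$ is small. The barrier is the explicit function $w=-a_n^{-1}(A_-x_1+\cdots+a_{n-1}x_{n-1})+\mu\varphi_M(x'-e_1')$ with $\varphi_M(x')=\min\{|x'|^{-M},\eta^{-M}\}-1$, chosen so that its graph has positive mean curvature on the annulus $B_1(e_1')\setminus B_\eta(e_1')$ and lies below $\gamma_*$ on $B_\eta(e_1')$. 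The contradiction then comes from Proposition~\ref{prop:Modica}, and rescaling back plus translating in the $e_2',\dots,e_{n-1}'$ directions yields the linear gain $\gamma_{\varepsilon_k}\ge -a_n^{-1}(A_-x_1+\cdots)+tx_1$ that contradicts the extremality of $A_-$.

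Your proposed route---build a strict Allen--Cahn supersolution from a tilted one-dimensional profile and invoke the Damascelli strong comparison principle at an interior touching point of $\{u=0\}$ with $H_{A_+}$---faces two concrete obstructions you have not resolved. First, as you note, $g_0(\phi_{A_+}(x))$ is not a solution (or even a super/subsolution of definite sign) when $A_+^2+a_2^2+\cdots+a_n^2\neq 1$, and a ``small correction'' that fixes this while keeping the zero set in the right position is not provided; this is precisely the issue the paper sidesteps by passing to mean curvature via Proposition~\ref{prop:Modica}. Second, the Damascelli principle compares \emph{functions}, not level sets: knowing that $\{v=0\}$ touches $\{u=0\}$ from one side at $p$ does not give $u\le v$ (or $u\ge v$) in a full neighborhood of $p$, which is what \cite[Theorem~1.4]{Da} requires. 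So the deduction ``forces coincidence of $u$ with the supersolution'' does not follow from what you have written. Until these two points are addressed, the near-$\Gamma$ part of your argument is incomplete; the paper's use of Proposition~\ref{prop:Modica} is exactly what fills this gap.
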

\begin{proof}
    By the definition of $A_{\pm}$ in \eqref{eq:def-A}, we know that
    $$\{x\in\overline{\mathbb{R}^{n}_{+}}:u(x)=0\}\subset\{x\in\overline{\mathbb{R}^{n}_{+}}:A_+x_1+a_2x_2+\cdots+a_nx_n\leq 0\}$$
    and
    $$\{x\in\overline{\mathbb{R}^{n}_{+}}:u(x)=0\}\subset\{x\in\overline{\mathbb{R}^{n}_{+}}:A_-x_1+a_2x_2+\cdots+a_nx_n\geq 0\},$$
    i.e. $\{u=0\}$ lies between $H_+$ and $H_-$. Since $H_{\pm}$ are Lipschitz scaling invariant, combining \eqref{eq:u-level}, we get
    $$\{x\in\overline{\mathbb{R}^{n}_{+}}:u_\varepsilon(x)=0\}\subset\{x\in\overline{\mathbb{R}^{n}_{+}}:A_+x_1+a_2x_2+\cdots+a_nx_n\leq 0\}$$
    and
    $$\{x\in\overline{\mathbb{R}^{n}_{+}}:u_\varepsilon(x)=0\}\subset\{x\in\overline{\mathbb{R}^{n}_{+}}:A_-x_1+a_2x_2+\cdots+a_nx_n\geq 0\},$$
    which means that $\{u_\varepsilon=0\}$ still lies between $H_{\pm}$. 

    Since $\partial_{x_n}u>0$, we know that $u$ is a graph in the $x_n$-direction. Hence, we denote 
    $$\{u=0\}=\{x \in \overline{\mathbb{R}^n_+} :\, x_n=\gamma(x')\},$$ 
    where $\gamma:\,\overline{\mathbb{R}^n_+}\to\mathbb{R}$ is a $C^2$-function satisfying
    $$u(x',\gamma(x'))=0,\quad\text{and}\quad \gamma(0,x_2,\cdots,x_{n-1})=-a_n^{-1} (a_2 x_2 + \cdots + a_{n-1} x_{n-1}).$$
    Denote $\gamma_{\varepsilon}(x'):=\varepsilon\gamma(\varepsilon^{-1}x')$. Then we have
    $$\{u_\varepsilon=0\}=\{x\in \overline{\mathbb{R}^n_+}: x_n=\gamma_{\varepsilon}(x')\}.$$
    By Lemma \ref{lem:converges}, there is sequence $\{\varepsilon_k\}$ such that $\{u_{\varepsilon_k}=0\}$ locally uniformly converges to a graphical half-hyperplane, denoted by $\partial E$, and
    $$\partial E=\{ x \in \overline{\mathbb{R}^n_+} :\, x_n = \gamma_{*}(x') \}\quad\text{with}\quad\partial E\cap\{x_1=0\}=\Gamma,$$
    where $\gamma_{*}(x'):=\lim_{k\to\infty}\gamma_{\varepsilon_k}(x')$ is a linear function in $\mathbb{R}^{n-1}_+$.
    Thus, we know that $\partial E$ still lies between $H_{\pm}$. Next, we show that the hyperplanes $H_{\pm}$ must coincide with the hyperplane $\partial E$, which implies that $A_+=A_-$. 
    
   Suppose towards a contradiction $A_{+}<A_{-}$ We first consider the case where $A_{-}<+\infty$. Suppose that $ H_- $ lies below $ \partial E $, i.e., the plane $ \partial E $ and $ H_- $ form a positive angle on the boundary $ \Gamma $.  Denote $ e_1' := (1, 0, \dots, 0) \in \mathbb{R}^{n-1}_+ $. Then, using a similar subsolution barrier function as in \cite[Lemma 3.1]{DMYZ}, we define
    $$
    w = -a_n^{-1} (A_- x_1 + a_2 x_2 + \cdots + a_{n-1} x_{n-1}) + \mu \varphi_M(x' - e_1'),
    $$
    where
    $$
    \varphi_M(x') := \min\{ |x'|^{-M}, \eta^{-M} \} - 1,
    $$
    and $ \mu $, $ \eta $ are chosen sufficiently small such that $ w < \gamma_{*} $ in $ B_\eta(e_1') $, and $ M $ is large enough such that graph of $ w $ has positive mean curvature in $ B_1(e_1') \setminus B_\eta(e_1') $.
    Thus, by a direct calculation, we know that there exists sufficient small constant $ \delta_0 > 0 $ such that
    \begin{equation}\label{superw}
         \Delta w-\frac{Dw^{T}D^2w Dw}{1+|Dw|^2} > \delta_0 \| D^2 w \|, \quad \| D^2 w \| < \delta_0^{-1} \quad \text{in} \quad B_1(e_1') \setminus B_\eta(e_1').
    \end{equation}
    Now, we claim that there exists a subsequence of $ \{ \varepsilon_k \} $, we may still denote it as $ \{ \varepsilon_k \} $, such that $ \{ u_{\varepsilon_k} = 0 \} $ continue to lie above the graph of $ w $, i.e. $\gamma_{\varepsilon_k}\geq w$ in $B_1(e_1')$.
    
    If this were not the case, by the boundary condition and uniform convergence, we know that there must exist a small domain in $ B_1(e_1') \setminus B_\eta(e_1') $ where $ \{ u_{\varepsilon_k} = 0 \} $ lies below the graph of $ w $. In this case, we can slide down the graph of $ w $ along the $x_n$-direction until it touches interior of $ \{ u_{\varepsilon_k} = 0 \} $.  By translating and rotating the graph of $w$ and $H_-$ we can assume in the new coordinates  $Dw$  in \eqref{superw} vanishes at the touching point.  Since the touching point lies in the interior, Proposition \ref{prop:Modica} leads to a contradiction. Therefore, there exists a subsequence $ \{ \varepsilon_k \} $ such that $ \{ u_{\varepsilon_k} = 0 \} $ still lies above the graph of $ w $. See Figure \ref{fig:1} for the geometric interpretation.

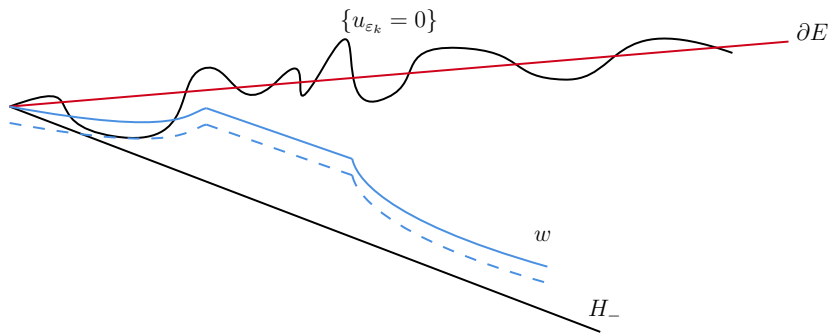
\begin{figure}[htbp]
        \centering
\tikzset{every picture/.style={line width=0.75pt}} 

\begin{tikzpicture}[x=0.75pt,y=0.75pt,yscale=-0.8,xscale=0.8]

\draw [line width=0.75]    (102,126.83) -- (474.33,269) ;
\draw    (102,126.83) .. controls (157,107) and (115.67,137.67) .. (168.33,145.67) .. controls (221,153.67) and (205,109.67) .. (223.67,103) .. controls (242.33,96.33) and (244.33,137.67) .. (271.67,109.67) .. controls (299,81.67) and (271,151.67) .. (300.33,101) .. controls (329.67,50.33) and (304.33,132.33) .. (335,123) .. controls (365.67,113.67) and (341.67,88) .. (385,89.83) .. controls (428.33,91.67) and (415.67,109) .. (452,109.83) .. controls (488.33,110.67) and (478.33,65) .. (557.67,93) ;
\draw [color={rgb, 255:red, 208; green, 2; blue, 27 }  ,draw opacity=1 ]   (102,126.83) -- (593,85.83) ;
\draw [color={rgb, 255:red, 74; green, 144; blue, 226 }  ,draw opacity=1 ]   (102,126.83) .. controls (211,146.83) and (214,130.83) .. (226,127.83) ;
\draw [color={rgb, 255:red, 74; green, 144; blue, 226 }  ,draw opacity=1 ]   (226,127.83) -- (318,159.83) ;
\draw [color={rgb, 255:red, 74; green, 144; blue, 226 }  ,draw opacity=1 ]   (318,159.83) .. controls (320,169.83) and (335,197.83) .. (441,227.83) ;
\draw [color={rgb, 255:red, 74; green, 144; blue, 226 }  ,draw opacity=1 ] [dash pattern={on 4.5pt off 4.5pt}]  (102,137.17) .. controls (211,157.17) and (214,141.17) .. (226,138.17) ;
\draw [color={rgb, 255:red, 74; green, 144; blue, 226 }  ,draw opacity=1 ] [dash pattern={on 4.5pt off 4.5pt}]  (226,138.17) -- (318,170.17) ;
\draw [color={rgb, 255:red, 74; green, 144; blue, 226 }  ,draw opacity=1 ] [dash pattern={on 4.5pt off 4.5pt}]  (318,170.17) .. controls (320,180.17) and (335,208.17) .. (441,238.17) ;

\draw (309,64) node [anchor=north west][inner sep=0.75pt]  [xscale=0.75,yscale=0.75] [align=left] {$\displaystyle \{u_{\varepsilon _{k}} =0\}$};
\draw (596.67,73) node [anchor=north west][inner sep=0.75pt]  [xscale=0.75,yscale=0.75] [align=left] {$\displaystyle \partial E$};
\draw (465.33,247.33) node [anchor=north west][inner sep=0.75pt]  [xscale=0.75,yscale=0.75] [align=left] {$\displaystyle H_{-}$};
\draw (431.33,204) node [anchor=north west][inner sep=0.75pt]  [xscale=0.75,yscale=0.75] [align=left] {$\displaystyle w$};
\end{tikzpicture}
        \caption{Barrier function}
        \label{fig:1}
    \end{figure}
    
    Following a similar argument to the proof of \cite[Lemma 3.1]{DMYZ}, we can complete the proof. Indeed, from the previous claim, we know that $ \gamma_{\varepsilon_k} \geq -a_n^{-1} (A_- x_1 + a_2 x_2 + \cdots + a_{n-1} x_{n-1}) + t x_1 $ on the line segment from $ 0' $ to $ e_1' $ for some small $ t > 0 $. We can repeat the same argument by replacing $ e_1' $ with $ e_1' + \lambda e' $ for any unit vector $ e' $ in the span of $ \{ e_2', \dots, e_{n-1}' \} $ and $ |\lambda| < \eta $. This allows us to conclude that $ \gamma_{\varepsilon_k} \geq -a_n^{-1} (A_- x_1 + a_2 x_2 + \cdots + a_{n-1} x_{n-1}) + t x_1 $ on the line segment from $ \lambda e' $ to $ e_1' + \lambda e' $ for all $ e' $ and $ \lambda $ as above. This implies the existence of a hyperplane between $ \{ u = 0 \} $ and $ H_- $, which forms a small positive angle with $ H_- $ on the boundary $ \Gamma $. This contradicts the definition of $ A_- $ in \eqref{eq:def-A}.  In the second case where $ A_- = +\infty $, we know that we can find a sufficiently small $ \mu > 0 $ such that the non-vertical hyperplane $ \widetilde{H} := \{ \mathbb{R}^n_+ : \mu^{-1} x_1 + a_2 x_2 + \cdots + a_n x_n = 0 \} $ lies between $ \partial E $ and $ H_- $, and is sufficiently close to $ H_- $. Then, we can replace $ H_- $ with $ \widetilde{H} $ in the above discussion and argue similarly as in the first case where  $ A_- <+\infty $ to derive a contradiction. Hence, we conclude that $ H_- = \partial E $. Similarly, we can show that $ H_+ = \partial E $. 
\end{proof}

Now, we can complete the proof of Theorem \ref{thm:main}.
\begin{proof}[Proof of Theorem \ref{thm:main}]
    Combining Lemma \ref{lem:global-min}, Lemma \ref{lem:converges}, and Lemma \ref{lem:A_+=A_-}, we conclude that the level set $ \{ u = 0 \} $ coincides with $ H_{\pm} $, implying that $ \{ u = 0 \} $ is a hyperplane. Since we can apply this method to all level sets of the solution to \eqref{eq:half-Al-Ca}, we deduce that all level sets are flat. Moreover, since $ \partial_{x_n} u > 0 $, i.e., $ u $ is increasing with respect to $ x_n $, the level sets are parallel and we conclude that  $ u $ must be a one-dimensional solution. Due to its boundary condition, we have
    $$
    u(x) = g_0(\pm a_1 x_1 + a_2 x_2 + \cdots + a_n x_n).
    $$
    Then we complete the proof.
\end{proof}


\end{document}